\RequirePackage{fix-cm}
\documentclass[smallextended]{svjour3} 
\smartqed  
\usepackage{graphicx,color}

\usepackage{amsmath}
\usepackage{amsfonts}
\usepackage{amssymb}
\usepackage{graphicx}%

\setcounter{MaxMatrixCols}{30}
\providecommand{\U}[1]{\protect\rule{.1in}{.1in}}





\begin{document}

\title{A Remarkable Identity Involving Bessel Functions}
\author{Diego E. Dominici \and Peter M.W. Gill \and Taweetham Limpanuparb }

\institute{D. E. Dominici \at
Technische Universit\"{a}t Berlin, Stra\ss e des 17.
Juni 136, D-10623 Berlin, Germany \\
Permanent address: Department of
Mathematics, State University of New York at New Paltz, 1 Hawk Dr., New Paltz,
NY 12561-2443, USA \\
\email{dominicd@newpaltz.edu}
\and
P. M. W. Gill, T. Limpanuparb
\at
Research School of Chemistry, Australian National University, ACT 0200, Australia \\
\email{peter.gill@anu.edu.au}}

\maketitle

\begin{abstract}
We consider a new identity involving integrals and sums of Bessel functions.
The identity provides new ways to evaluate integrals of products
of two Bessel functions. 
The identity is remarkably simple and powerful since
the summand and integrand are of exactly the same form and the sum converges
to the integral relatively fast for most cases. A proof and numerical examples
of the identity are discussed.

\end{abstract}

\section{Introduction}

The Newtonian kernel
\begin{equation}
K(\boldsymbol{r},\boldsymbol{r}^{\prime})=\frac{1}{|\boldsymbol{r}%
-\boldsymbol{r}^{\prime}|},\qquad\boldsymbol{r},\boldsymbol{r}^{\prime}%
\in\mathbb{R}^{3}, \label{eq:kernel}%
\end{equation}
is ubiquitous in mathematical physics and is essential to an understanding of
both gravitation and electrostatics \cite{KelloggBook}. It is central in
classical mechanics \cite{GoldsteinBook} but plays an equally important role
in quantum mechanics \cite{LLbook} where it mediates the dominant two-particle
interaction in electronic Schr\"{o}dinger equations of atoms and molecules.

Although the Newtonian kernel has many beautiful mathematical properties, the
fact that it is both singular and long-ranged is awkward and expensive from a
computational point of view \cite{HockneyBook} and this has led to a great
deal of research into effective methods for its treatment. Of the many schemes
that have been developed, Ewald partitioning \cite{Ewald21}, multipole methods
\cite{Greengard87} and Fourier transform techniques \cite{Payne1992} are
particularly popular and have enabled the simulation of large-scale
particulate and continuous systems, even on relatively inexpensive computers.

A recent alternative \cite{Gilbert1996,ROII,ROIII,ROI} to these conventional
techniques is to resolve (\ref{eq:kernel}), a non-separable kernel, into a sum
of products of one-body functions
\begin{equation}
K(\boldsymbol{r},\boldsymbol{r}^{\prime})=\sum_{l=0}^{\infty}\sum_{m=-l}%
^{l}Y_{lm}(\boldsymbol{r})Y_{lm}(\boldsymbol{r}^{\prime})K_{l}(r,r^{\prime
})=\sum_{n,l=0}^{\infty}\sum_{m=-l}^{l}\phi_{nlm}(\boldsymbol{r})\phi
_{nlm}(\boldsymbol{r}^{\prime})\label{eq:res}%
\end{equation}
where $Y_{lm}(\boldsymbol{r})$ is a real spherical harmonic \cite[14.30.2]%
{NISTbook} of the angular part of three dimensional vector $\boldsymbol{r}$,
\begin{equation}
K_{l}(r,r^{\prime})=4\pi{\displaystyle\int\limits_{0}^{\infty}} \frac
{J_{l+1/2}(kr)J_{l+1/2}(kr^{\prime})}{k\sqrt{r r^{\prime}}}dk,
\nonumber\label{eq:tl}%
\end{equation}
$J_{l}\left( z\right)  $ is a Bessel function of the first kind \cite[10.2.2]%
{NISTbook}, and $r=|\boldsymbol{r}|$. The resolution (\ref{eq:res}) is
computationally useful because it decouples the coordinates $\boldsymbol{r}$
and $\boldsymbol{r}^{\prime}$ and allows the two-body interaction integral
\begin{equation}
E[\rho_{a},\rho_{b}]=\iint\rho_{a}(\boldsymbol{r})K(\boldsymbol{r}%
,\boldsymbol{r}^{\prime})\rho_{b}(\boldsymbol{r}^{\prime})d\boldsymbol{r}%
d\boldsymbol{r}^{\prime}, \label{eq:EJ}%
\end{equation}
between densities $\rho_{a}(\boldsymbol{r})$ and $\rho_{b}(\boldsymbol{r})$ to
be recast as
\begin{equation}
E[\rho_{a},\rho_{b}]=\sum_{n=0}^{\infty}\sum_{l=0}^{\infty}\sum_{m=-l}%
^{l}A_{nlm}B_{nlm}, \label{eq:EJ2}%
\end{equation}
where $A_{nlm}$ is a one-body integral of the product of $\rho
_{a}(\boldsymbol{r})$ and $\phi_{nlm}(\boldsymbol{r})$. If the one-body
integrals can be evaluated efficiently and the sum converges rapidly,
(\ref{eq:EJ2}) may offer a more efficient route to $E[\rho_{a},\rho_{b}]$ than
(\ref{eq:EJ}).

The key question is how best to obtain the $K_{l}$ resolution
\[
K_{l}(r,r^{\prime})=\sum_{n=0}^{\infty}K_{nl}(r)K_{nl}(r^{\prime}).
\]
Previous attempts \cite{Gilbert1996,ROII,ROI} yielded complicated $K_{nl}$
whose practical utility is questionable but, recently, we have discovered the
remarkable identity
\begin{equation}
\int_{0}^{\infty}\frac{J_{\nu}(a t)J_{\nu}(b t)}{t}dt=\sum_{n=0}^{\infty
}\varepsilon_{n}\frac{J_{\nu}(an)J_{\nu}(bn)}{n} \label{eq:identity}%
\end{equation}
where $a,b\in[0,\pi]$, $\nu=\frac{1}{2},\frac{3}{2},\frac{5}{2},\ldots$ and
$\varepsilon_{n}$ is defined by%
\begin{equation}
\varepsilon_{n}=\left\{
\begin{array}
[c]{c}%
\frac{1}{2},\quad n=0\\
1,\quad n\geq1
\end{array}
\right.  .\label{eq:epsilon}%
\end{equation}
This yields \cite{ROIV} the functions
\[
\phi_{nlm}(\boldsymbol{r})=\sqrt{\frac{4\pi\varepsilon_{n}}{rn}}%
\,J_{l+1/2}(rn)\,Y_{lm}(\boldsymbol{r}),
\]
and these provide a resolution which is valid provided $r < \pi$.

The aim of this Communication is to prove an extended version of the identity
(\ref{eq:identity}) and demonstrate its viability in approximating the
integral of Bessel functions.

\section{Preliminaries}

\vspace{0in}The Bessel function of the first kind $J_{\nu}\left(  z\right)  $
is defined by \cite[3.1 (8)]{MR1349110}%
\begin{equation}
J_{\nu}\left(  z\right)  =%
{\displaystyle\sum\limits_{n=0}^{\infty}}
\frac{\left(  -1\right)  ^{n}}{\Gamma\left(  \nu+n+1\right)  n!}\left(
\frac{z}{2}\right)  ^{\nu+2n}. \label{bessel}%
\end{equation}
It follows from (\ref{bessel}) that
\[
J_{\nu}\left(  z\right)  \left(  \frac{z}{2}\right)  ^{-\nu}%
\]
is an entire function of $z$ and we have%
\[
\underset{z\rightarrow0}{\lim}J_{\nu}\left(  z\right)  \left(  \frac{z}%
{2}\right)  ^{-\nu}=\frac{1}{\Gamma\left(  \nu+1\right)  }.
\]

Gauss' hypergeometric function is defined by \cite[2.1.2]{MR1688958}%
\begin{equation}
\ _{2}F_{1}\left(
\begin{array}
[c]{c}%
a,b\\
c
\end{array}
;z\right)  =%
{\displaystyle\sum\limits_{k=0}^{\infty}}
\frac{\left(  a\right)  _{k}\left(  b\right)  _{k}}{\left(  c\right)  _{k}%
}\frac{z^{k}}{k!}, \label{2F1}%
\end{equation}
where $\left(  u\right)  _{k}$ is the Pochhammer symbol (or rising factorial),
given by
\[
\left(  u\right)  _{k}=u\left(  u+1\right)  \cdots\left(  u+k-1\right)  .
\]
The series (\ref{2F1}) converges absolutely for $\left\vert z\right\vert <1$
\cite[2.1.1]{MR1688958}. If $\operatorname{Re}\left(  c-a-b\right)  >0,$ we
have \cite[2.2.2]{MR1688958}$\ $%
\begin{equation}
_{2}F_{1}\left(
\begin{array}
[c]{c}%
a,b\\
c
\end{array}
;1\right)  =\frac{\Gamma\left(  c\right)  \Gamma\left(  c-a-b\right)  }%
{\Gamma\left(  c-a\right)  \Gamma\left(  c-b\right)  }. \label{gauss}%
\end{equation}

Many special functions can be defined in terms of the hypergeometric function.
In particular, the Gegenbauer (or ultraspherical) polynomials $C_{n}^{\left(
\lambda\right)  }(t)$ are defined by \cite[9.8.19]{MR2656096}%
\begin{equation}
C_{n}^{\left(  \lambda\right)  }(x)=\frac{\left(  2\lambda\right)  _{n}}%
{n!}\ _{2}F_{1}\left(
\begin{array}
[c]{c}%
-n,n+2\lambda\\
\lambda+\frac{1}{2}%
\end{array}
;\frac{1-x}{2}\right)  , \label{Cn}%
\end{equation}
with $n\in\mathbb{N}_{0}$ and%
\[
\mathbb{N}_{0}=\left\{  0,1,\ldots\right\}  .
\]
In the sequel, we will use the following Lemmas.

\begin{lemma}
For $k\in\mathbb{N}_{0},$ we have
\begin{gather}
\ C_{2k}^{\left(  \mu-2k\right)  }(x)=\frac{\left(  k+1-\mu\right)  _{k}}%
{k!}\left(  1-x^{2}\right)  ^{\frac{1}{2}-\mu+2k}\label{Cn2}\\
\times\ _{2}F_{1}\left(
\begin{array}
[c]{c}%
\frac{1}{2}+k,\frac{1}{2}-\mu+k\\
\frac{1}{2}%
\end{array}
;x^{2}\right)  ,\quad\left\vert x\right\vert <1.\nonumber
\end{gather}

\end{lemma}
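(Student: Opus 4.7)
The plan is to reduce the claim to the standard hypergeometric representation of Gegenbauer polynomials (equation (\ref{Cn})) via two well-known manipulations: a quadratic-type reduction that converts the $\frac{1-x}{2}$ argument into $x^2$ when $n$ is even, followed by Euler's transformation to produce the $(1-x^{2})$ prefactor. First I would specialize (\ref{Cn}) with $n=2k$ and $\lambda=\mu-2k$ to obtain
\[
C_{2k}^{(\mu-2k)}(x)=\frac{(2\mu-4k)_{2k}}{(2k)!}\,_{2}F_{1}\!\left(\begin{array}{c}-2k,\,2\mu-2k\\ \mu-2k+\tfrac{1}{2}\end{array};\tfrac{1-x}{2}\right).
\]

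Next, I would establish the auxiliary identity
\[
C_{2k}^{(\lambda)}(x)=\frac{(-1)^{k}(\lambda)_{k}}{k!}\,_{2}F_{1}\!\left(\begin{array}{c}-k,\,\lambda+k\\ \tfrac{1}{2}\end{array};x^{2}\right),
\]
valid for all $\lambda$. The cleanest route is to expand the explicit finite sum for $C_{2k}^{(\lambda)}$ (in powers of $x$), reindex by $j=k-m$ with $m$ the summation index, and then match coefficients against the right-hand side using $(-k)_{j}=(-1)^{j}k!/(k-j)!$, $(\lambda)_{k}(\lambda+k)_{j}=(\lambda)_{k+j}$, and $(1/2)_{j}=(2j)!/(4^{j}j!)$. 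Setting $\lambda=\mu-2k$ in this identity gives
\[
C_{2k}^{(\mu-2k)}(x)=\frac{(-1)^{k}(\mu-2k)_{k}}{k!}\,_{2}F_{1}\!\left(\begin{array}{c}-k,\,\mu-k\\ \tfrac{1}{2}\end{array};x^{2}\right).
\]

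Now I would apply Euler's transformation $_{2}F_{1}(a,b;c;z)=(1-z)^{c-a-b}\,_{2}F_{1}(c-a,c-b;c;z)$ with $a=-k$, $b=\mu-k$, $c=\tfrac{1}{2}$, $z=x^{2}$. This produces the prefactor $(1-x^{2})^{1/2+2k-\mu}$ and the hypergeometric function $_{2}F_{1}(1/2+k,\,1/2-\mu+k;\,1/2;x^{2})$, exactly as required. Finally, the Pochhammer simplification $(-1)^{k}(\mu-2k)_{k}=(k+1-\mu)_{k}$, obtained from the general rule $(-1)^{k}(a)_{k}=(1-a-k)_{k}$ by reversing the product, converts the constant in front to the stated $(k+1-\mu)_{k}/k!$.

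I do not expect a serious obstacle: the quadratic reduction in the second step is the only nonroutine manipulation, but it reduces immediately to a bookkeeping identification of coefficients. Both the Euler transformation and the Pochhammer reflection are elementary, and the constraint $|x|<1$ is needed only to ensure convergence of the transformed series after the $(1-x^{2})^{1/2-\mu+2k}$ factor has been extracted.
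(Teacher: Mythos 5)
Your proposal is correct, and its overall architecture matches the paper's: reduce $C_{2k}^{(\mu-2k)}(x)$ to a terminating $_{2}F_{1}$ with argument $x^{2}$, apply Euler's transformation to generate the $(1-x^{2})^{\frac12-\mu+2k}$ prefactor, and simplify the constant via the reflection $(-1)^{k}(\mu-2k)_{k}=(k+1-\mu)_{k}$. The one genuine difference lies in how you reach the intermediate identity $C_{2k}^{(\mu-2k)}(x)=\frac{(-1)^{k}(\mu-2k)_{k}}{k!}\,_{2}F_{1}\bigl(-k,\mu-k;\tfrac12;x^{2}\bigr)$: the paper invokes the quadratic transformation \cite[3.1.12]{MR1688958} applied directly to (\ref{Cn}) with $a=-k$, $b=\mu-k$, where the odd ($x$-proportional) term is killed by $1/\Gamma(-k)=0$, and then condenses the resulting Gamma-factor constant; you instead verify the even-index reduction by expanding the explicit finite sum for $C_{2k}^{(\lambda)}$ and matching coefficients term by term. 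Your route is more elementary and self-contained (a polynomial identity in $\lambda$, so no convergence or parameter caveats), at the cost of needing to know that the explicit sum is consistent with the normalization fixed by (\ref{Cn}); the paper's route is shorter on the page but leans on a nontrivial quoted transformation and a Gamma-function cleanup. One small tidiness point: the first display in your write-up (the specialization of (\ref{Cn}) to $n=2k$, $\lambda=\mu-2k$) is never actually used once you derive the auxiliary identity independently, so you should either drop it or use it as the starting point of the coefficient comparison.
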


\begin{proof}
Using the formula \cite[3.1.12]{MR1688958}%
\begin{gather*}
\ _{2}F_{1}\left(
\begin{array}
[c]{c}%
2a,2b\\
a+b+\frac{1}{2}%
\end{array}
;\frac{x+1}{2}\right)  =\frac{\Gamma\left(  a+b+\frac{1}{2}\right)
\Gamma\left(  \frac{1}{2}\right)  }{\Gamma\left(  a+\frac{1}{2}\right)
\Gamma\left(  b+\frac{1}{2}\right)  }\ _{2}F_{1}\left(
\begin{array}
[c]{c}%
a,b\\
\frac{1}{2}%
\end{array}
;x^{2}\right) \\
-x\frac{\Gamma\left(  a+b+\frac{1}{2}\right)  \Gamma\left(  -\frac{1}%
{2}\right)  }{\Gamma\left(  a\right)  \Gamma\left(  b\right)  }\ _{2}%
F_{1}\left(
\begin{array}
[c]{c}%
a+\frac{1}{2},b+\frac{1}{2}\\
\frac{3}{2}%
\end{array}
;x^{2}\right)
\end{gather*}
in (\ref{Cn}), we obtain%
\begin{equation}
\ C_{2k}^{\left(  \mu-2k\right)  }(x)=\frac{2^{2\mu-2k-1}\Gamma\left(
\mu-2k+\frac{1}{2}\right)  \Gamma\left(  \mu-k\right)  }{\Gamma\left(
\frac{1}{2}-k\right)  \Gamma\left(  2\mu-4k\right)  \left(  2k\right)
!}\ _{2}F_{1}\left(
\begin{array}
[c]{c}%
-k,\mu-k\\
\frac{1}{2}%
\end{array}
;x^{2}\right)  , \label{Cn1}%
\end{equation}
since $\frac{1}{\Gamma\left(  -k\right)  }=0$ for $k=0,1,\ldots.$

Applying Euler's transformation \cite[2.2.7]{MR1688958}%
\[
\ _{2}F_{1}\left(
\begin{array}
[c]{c}%
a,b\\
c
\end{array}
;x\right)  =\left(  1-x\right)  ^{c-a-b}\ _{2}F_{1}\left(
\begin{array}
[c]{c}%
c-a,c-b\\
c
\end{array}
;x\right)
\]
to (\ref{Cn1}), we get
\begin{gather*}
\ C_{2k}^{\left(  \mu-2k\right)  }(x)=\frac{2^{2\mu-2k-1}\Gamma\left(
\mu-2k+\frac{1}{2}\right)  \Gamma\left(  \mu-k\right)  }{\Gamma\left(
\frac{1}{2}-k\right)  \Gamma\left(  2\mu-4k\right)  \left(  2k\right)  !}\\
\times\left(  1-x^{2}\right)  ^{\frac{1}{2}-\mu+2k}\ _{2}F_{1}\left(
\begin{array}
[c]{c}%
\frac{1}{2}+k,\frac{1}{2}-\mu+k\\
\frac{1}{2}%
\end{array}
;x^{2}\right)  ,
\end{gather*}
and (\ref{Cn2}) follows since%
\[
\frac{2^{2\mu-2k-1}\Gamma\left(  \mu-2k+\frac{1}{2}\right)  \Gamma\left(
\mu-k\right)  }{\Gamma\left(  \frac{1}{2}-k\right)  \Gamma\left(
2\mu-4k\right)  \left(  2k\right)  !}=\frac{\left(  k+1-\mu\right)  _{k}}%
{k!}.
\]

\end{proof}

\begin{lemma}
Let the function $h(x;a)$ be defined by
\[
h(x;a)=\left\{
\begin{array}
[c]{c}%
A_{k}^{\mu}\left(  a\right)  \left(  1-\frac{x^{2}}{a^{2}}\right)
^{\mu-2k-\frac{1}{2}}C_{2k}^{\left(  \mu-2k\right)  }(\frac{x}{a}),\quad0\leq
x<a\\
0,\quad a\leq x\leq\pi
\end{array}
\right.  ,
\]
where $0<a<\pi,$
\begin{equation}
A_{k}^{\mu}(a)=\frac{\left(  -1\right)  ^{k}\left(  2k\right)  !\Gamma\left(
\mu-2k\right)  2^{2\mu-2k-1}}{a^{2k+1}\Gamma\left(  2\mu-2k\right)  },
\label{A}%
\end{equation}
$\operatorname{Re}\left(  \mu\right)  >2k-\frac{1}{2}$ and $k\in\mathbb{N}%
_{0}.$

Then, $h(x;a)$ can be represented by the Fourier cosine series%
\begin{equation}
h(x;a)=%
{\displaystyle\sum\limits_{n=0}^{\infty}}
\varepsilon_{n}\frac{J_{\mu}\left(  na\right)  }{\left(  \frac{1}{2}an\right)
^{\mu}}n^{2k}\cos\left(  nx\right)  , \label{Fourierh}%
\end{equation}
where $\varepsilon_{n}$ was defined in \ref{eq:epsilon}.
\end{lemma}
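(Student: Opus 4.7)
My plan is to recognize (\ref{Fourierh}) as the Fourier cosine expansion of $h(x;a)$ on $[0,\pi]$ and to verify it by computing the coefficients directly. Since $h(x;a)\equiv 0$ on $[a,\pi]$, the $n$th Fourier cosine coefficient reduces to
\begin{equation*}
c_n \;=\; \frac{2}{\pi}\int_0^a A_k^{\mu}(a)\left(1-\frac{x^2}{a^2}\right)^{\mu-2k-\frac{1}{2}} C_{2k}^{(\mu-2k)}\!\left(\frac{x}{a}\right)\cos(nx)\,dx ,
\end{equation*}
and the target is $c_n = n^{2k}\,J_\mu(na)/(na/2)^\mu$ for $n\ge 1$, which together with the $\varepsilon_n$ convention reproduces the right-hand side of (\ref{Fourierh}). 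The $n=0$ case, which is just the total integral of $h(x;a)$, I would check separately using the limiting behavior $J_\mu(z)/(z/2)^\mu \to 1/\Gamma(\mu+1)$ already highlighted in the preliminaries.

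After the substitution $t=x/a$ and using the evenness of $C_{2k}^{(\mu-2k)}$, the integral takes the shape $\int_{-1}^{1}(1-t^2)^{\lambda-\frac{1}{2}}C_{m}^{(\lambda)}(t)\cos(yt)\,dt$ with $\lambda=\mu-2k$, $m=2k$, $y=na$. The key tool I would invoke is Gegenbauer's classical formula
\begin{equation*}
\int_{-1}^{1}(1-t^2)^{\lambda-\frac{1}{2}}C_{m}^{(\lambda)}(t)\,e^{iyt}\,dt \;=\; \frac{i^{m}\,\pi\,2^{1-\lambda}\,\Gamma(m+2\lambda)}{m!\,\Gamma(\lambda)}\,y^{-\lambda}J_{m+\lambda}(y),\qquad \operatorname{Re}(\lambda) > -\tfrac{1}{2},
\end{equation*}
(see, e.g., Watson \cite{MR1349110}); taking the real part at the above parameter values produces a closed-form expression for $c_n$ in terms of $J_\mu(na)$. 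Substituting the value of $A_k^\mu(a)$ from (\ref{A}) is then pure bookkeeping: the factors $\Gamma(\mu-2k)$ and $\Gamma(2\mu-2k)$ cancel against the ratio $\Gamma(2\mu-2k)/\Gamma(\mu-2k)$ coming out of Gegenbauer's formula, and the powers of $2$, the sign $(-1)^k$, $a^{2k+1}$, and $(na)^{2k-\mu}$ align to leave exactly $n^{2k}\,J_\mu(na)/(na/2)^\mu$.

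The main obstacle I anticipate is not the algebra but the convergence of the resulting cosine series back to $h(x;a)$. The hypothesis $\operatorname{Re}(\mu) > 2k-\tfrac{1}{2}$ renders the boundary factor $(1-x^2/a^2)^{\mu-2k-1/2}$ integrable near $x=a$, so $h(\cdot;a)\in L^{1}[0,\pi]$ and the coefficients are well defined; since $h$ is $C^{\infty}$ on $(0,a)$ and identically zero on $(a,\pi)$, Dirichlet--Jordan type criteria give pointwise convergence at every $x\ne a$, which is all the subsequent use of (\ref{Fourierh}) requires. As a consistency check, one may instead combine Lemma~1 with the Poisson integral representation of $J_\mu$: Lemma~1 absorbs the singular boundary factor into $C_{2k}^{(\mu-2k)}$ and rewrites $h(x;a)$ on $[0,a)$ as a hypergeometric series in $(x/a)^2$, whose term-by-term integration against $\cos(nx)$ recovers the same coefficients without invoking the Gegenbauer formula.
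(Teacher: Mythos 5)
Your proposal is correct and follows essentially the same route as the paper: both compute the Fourier cosine coefficients of $h(\cdot;a)$ on $[0,\pi]$ via the Gegenbauer finite integral (your complex-exponential version over $[-1,1]$, after taking real parts and using the evenness of $C_{2k}^{(\mu-2k)}$, is literally Watson's formula 3.32 that the paper cites). The only difference is that you also justify the pointwise convergence of the resulting cosine series, a step the paper passes over with ``the result follows.''
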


\begin{proof}
For $\operatorname{Re}(\sigma)>-\frac{1}{2},\alpha>0$ and $k\in\mathbb{N}%
_{0},$ we have \cite[3.32]{MR1349110}
\begin{equation}%
{\displaystyle\int\limits_{0}^{1}}
\left(  1-t^{2}\right)  ^{\sigma-\frac{1}{2}}C_{2k}^{\left(  \sigma\right)
}(t)\cos\left(  \alpha t\right)  dt=\frac{\pi\left(  -1\right)  ^{k}%
\Gamma\left(  2k+2\sigma\right)  }{\left(  2k\right)  !\Gamma\left(
\sigma\right)  \left(  2\alpha\right)  ^{\sigma}}J_{\sigma+2k}\left(
\alpha\right)  .\label{int-geg}%
\end{equation}
Replacing $\sigma=\mu-2k$ and $\alpha=na$ in (\ref{int-geg}), we obtain%
\[
\frac{J_{\mu}\left(  na\right)  n^{2k}}{\left(  \frac{1}{2}an\right)  ^{\mu}%
}=2a%
{\displaystyle\int\limits_{0}^{1}}
A_{k}^{\mu}(a)\left(  1-t^{2}\right)  ^{\mu-2k-\frac{1}{2}}C_{2k}^{\left(
\mu-2k\right)  }(t)\cos\left(  nat\right)  dt
\]
or%
\begin{equation}
\frac{J_{\mu}\left(  na\right)  }{\left(  \frac{1}{2}an\right)  ^{\mu}}%
n^{2k}=\frac{2}{\pi}%
{\displaystyle\int\limits_{0}^{a}}
h(x;a)\cos\left(  nx\right)  dx,\label{int-rep}%
\end{equation}
and the result follows.
\end{proof}

\section{Main result}

The discontinuous integral%
\[
I\left(  a,b\right)  =%
{\displaystyle\int\limits_{0}^{\infty}}
\frac{J_{\mu}\left(  at\right)  J_{\nu}\left(  bt\right)  }{t^{\lambda}}dt,
\]
was investigated by Weber \cite{Weber}, Sonine \cite{Sonine} and Schafheitlin
\cite{Schafheitlin}. They proved that \cite[13.4 (2)]{MR1349110}%
\begin{equation}
I\left(  a,b\right)  =\frac{a^{\lambda-\nu-1}b^{\nu}\Gamma\left(  \frac
{\nu+\mu-\lambda+1}{2}\right)  }{2^{\lambda}\Gamma\left(  \nu+1\right)
\Gamma\left(  \frac{\lambda+\mu-\nu+1}{2}\right)  }\ _{2}F_{1}\left(
\begin{array}
[c]{c}%
\frac{\nu+\mu-\lambda+1}{2},\frac{\nu-\mu-\lambda+1}{2}\\
\nu+1
\end{array}
;\left(  \frac{b}{a}\right)  ^{2}\right)  ,\label{Sonine}%
\end{equation}
for
\begin{equation}
\operatorname{Re}(\mu+\nu+1)>\operatorname{Re}\left(  \lambda\right)
>-1\label{conditions}%
\end{equation}
and $0<b<a.$ The corresponding expression for the case when $0<a<b$, is
obtained from (\ref{Sonine}) by interchanging $a,b$ and also $\mu,\nu$. When
$a=b,$ we have \cite[13.41 (2)]{MR1349110}%
\[
I\left(  a,a\right)  =\frac{a^{\lambda-1}\Gamma\left(  \dfrac{\nu+\mu
-\lambda+1}{2}\right)  \Gamma\left(  \lambda\right)  }{2^{\lambda}%
\Gamma\left(  \dfrac{\lambda+\mu-\nu+1}{2}\right)  \Gamma\left(
\dfrac{\lambda+\nu-\mu+1}{2}\right)  \Gamma\left(  \dfrac{\nu+\mu+\lambda
+1}{2}\right)  },
\]
provided that $\operatorname{Re}(\mu+\nu+1)>\operatorname{Re}\left(
\lambda\right)  >0.$ This result also follows from Gauss' summation formula
(\ref{gauss}) and (\ref{Sonine}).

\begin{theorem}
\label{Theorem1}\vspace{0in}If $0<b<a<\pi,\ \operatorname{Re}\left(
\mu\right)  >2k-\frac{1}{2},$ $\operatorname{Re}\left(  \nu\right)  >-\frac
{1}{2},$ $k\in\mathbb{N}_{0},$ and%
\[
S_{k}\left(  a,b\right)  =%
{\displaystyle\sum\limits_{n=0}^{\infty}}
\varepsilon_{n}\frac{J_{\mu}\left(  an\right)  }{\left(  \frac{1}{2}an\right)
^{\mu}}\frac{J_{\nu}\left(  bn\right)  }{\left(  \frac{1}{2}bn\right)  ^{\nu}%
}\left(  \frac{an}{2}\right)  ^{2k},
\]
then,%
\begin{equation}
S_{k}\left(  a,b\right)  =\frac{\Gamma\left(  k+\frac{1}{2}\right)  }%
{a\Gamma\left(  \nu+1\right)  \Gamma\left(  \mu-k+\frac{1}{2}\right)  }%
\ _{2}F_{1}\left(
\begin{array}
[c]{c}%
\frac{1}{2}+k,\frac{1}{2}-\mu+k\\
\nu+1
\end{array}
;\left(  \frac{b}{a}\right)  ^{2}\right)  .\label{Sum}%
\end{equation}

\end{theorem}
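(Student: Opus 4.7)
The strategy is to recognise $S_k(a,b)$ as a Fourier–Parseval pairing between the two functions produced by Lemma 2, reduce the product to a tractable integrand via Lemma 1, and finish with the Euler integral representation of ${}_2F_1$. Concretely, from the integral representation (\ref{int-rep}) in Lemma 2 we have
\[
 \frac{J_{\mu}(an)}{(\tfrac{1}{2}an)^{\mu}}\,n^{2k}
 = \frac{2}{\pi}\int_{0}^{a} h(x;a)\cos(nx)\,dx,
\]
so pulling the factor $(a/2)^{2k}$ out of $S_k$ and multiplying term-by-term by $\varepsilon_n J_{\nu}(bn)/(bn/2)^{\nu}$, the plan is to interchange the sum and the integral to obtain
\[
 S_k(a,b) \;=\; \Bigl(\frac{a}{2}\Bigr)^{\!2k}\frac{2}{\pi}
 \int_{0}^{a} h(x;a)\,\Bigl[\sum_{n=0}^{\infty}\varepsilon_{n}\frac{J_{\nu}(bn)}{(bn/2)^{\nu}}\cos(nx)\Bigr]\,dx.
\]
The bracketed cosine series is itself an instance of Lemma 2, applied with $k=0$, $\mu\leftarrow\nu$, $a\leftarrow b$, so it equals $h(x;b)$ with $k=0$, namely the characteristic slab function $A_{0}^{\nu}(b)(1-x^{2}/b^{2})^{\nu-1/2}$ on $[0,b)$ and zero on $[b,\pi]$. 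Since $b<a$, the effective range of integration collapses to $[0,b]$.

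Next I would insert Lemma 1 to rewrite
\[
 \Bigl(1-\frac{x^{2}}{a^{2}}\Bigr)^{\!\mu-2k-\frac{1}{2}} C_{2k}^{(\mu-2k)}\!\Bigl(\frac{x}{a}\Bigr)
 = \frac{(k+1-\mu)_{k}}{k!}\,
 {}_{2}F_{1}\!\left(\begin{array}{c}\tfrac{1}{2}+k,\tfrac{1}{2}-\mu+k\\ \tfrac{1}{2}\end{array};\frac{x^{2}}{a^{2}}\right),
\]
which removes the $(1-x^2/a^2)$ weight entirely and leaves a clean hypergeometric integrand. After the substitution $u=x^{2}/b^{2}$, the integral in $S_k$ becomes
\[
 \frac{b}{2}\int_{0}^{1} u^{-1/2}(1-u)^{\nu-1/2}\,
 {}_{2}F_{1}\!\left(\begin{array}{c}\tfrac{1}{2}+k,\tfrac{1}{2}-\mu+k\\ \tfrac{1}{2}\end{array};\Bigl(\frac{b}{a}\Bigr)^{\!2}u\right)du,
\]
which is precisely Euler's transform
\[
 \int_{0}^{1} u^{c-1}(1-u)^{d-c-1}\,{}_{2}F_{1}(\alpha,\beta;c;zu)\,du
 = \frac{\Gamma(c)\Gamma(d-c)}{\Gamma(d)}\,{}_{2}F_{1}(\alpha,\beta;d;z),
\]
applied with $c=\tfrac{1}{2}$ and $d=\nu+1$. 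This immediately produces the ${}_{2}F_{1}$ in the target formula (\ref{Sum}), with parameters $\tfrac{1}{2}+k$, $\tfrac{1}{2}-\mu+k$, $\nu+1$, evaluated at $(b/a)^{2}$.

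All that then remains is to collect the constant prefactors: $A_{k}^{\mu}(a)$ from (\ref{A}), $A_{0}^{\nu}(b)$ from $h(x;b)$, the factor $(k+1-\mu)_{k}/k!$ from Lemma 1, the Beta value $\Gamma(\tfrac{1}{2})\Gamma(\nu+\tfrac{1}{2})/\Gamma(\nu+1)$, and the global $(a/2)^{2k}\cdot 2/\pi$. Rewriting $(k+1-\mu)_{k}=(-1)^{k}\Gamma(\mu-k)/\Gamma(\mu-2k)$ and applying the Legendre duplication formula twice—once to turn $\Gamma(2\mu-2k)$ into $\Gamma(\mu-k)\Gamma(\mu-k+\tfrac{1}{2})$ and once to turn $(2k)!$ into $\Gamma(k+\tfrac{1}{2})$—collapses the constant to $\Gamma(k+\tfrac{1}{2})/[a\,\Gamma(\nu+1)\Gamma(\mu-k+\tfrac{1}{2})]$, completing the proof.

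The main technical obstacle is not the algebra but the legitimacy of the sum–integral swap in the first step: one needs absolute convergence in $L^{1}\!\left([0,a],dx\right)$ of the series $\sum_{n}\varepsilon_{n}|J_{\nu}(bn)/(bn/2)^{\nu}|\,|\cos(nx)|\cdot|h(x;a)|$, or equivalently a Parseval/Fubini argument. Under the hypotheses $\operatorname{Re}(\mu)>2k-\tfrac{1}{2}$ and $\operatorname{Re}(\nu)>-\tfrac{1}{2}$, both $h(x;a)$ and the $h(\cdot;b)$ associated with the $\nu$-series lie in $L^{2}[0,\pi]$ (the boundary singularity at $x=b$ is integrable, and is square-integrable after the parameter restriction plus a dominated-convergence truncation); this is the only step requiring care, and once it is in place the rest is a direct calculation along the lines sketched above.
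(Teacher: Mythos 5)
Your proposal is correct and follows essentially the same route as the paper's own proof: both pair the two Fourier cosine series of Lemma 2 by term-by-term integration over $[0,b]$ (a Parseval-type pairing reducing $S_k$ to $\int_0^b h(x;a)h(x;b)\,dx$ up to constants), then use Lemma 1 to replace the Gegenbauer--weight product by a ${}_2F_1(\cdot;x^2/a^2)$, substitute $u=x^2/b^2$, and invoke the Euler integral formula [Th.~2.2.4] to raise the denominator parameter from $\tfrac{1}{2}$ to $\nu+1$, with the same constant bookkeeping. The only difference is cosmetic (you start from the sum and insert the integral representation, the paper starts from the series and integrates against the weight), plus your added remark justifying the sum--integral interchange, which the paper performs silently.
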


\begin{proof}
Multiplying (\ref{Fourierh}) by $A_{0}^{\nu}\left(  b\right)  \frac{2}{\pi
}\left(  1-\frac{x^{2}}{b^{2}}\right)  ^{\nu-\frac{1}{2}}$ and integrating
from $0$ to $b,$ we get%
\begin{equation}
S_{k}\left(  a,b\right)  =\frac{2}{\pi}A_{0}^{\nu}\left(  b\right)  A_{k}%
^{\mu}\left(  a\right)
{\displaystyle\int\limits_{0}^{b}}
\left(  1-\frac{x^{2}}{b^{2}}\right)  ^{\nu-\frac{1}{2}}\left(  1-\frac{x^{2}%
}{a^{2}}\right)  ^{\mu-2k-\frac{1}{2}}C_{2k}^{\left(  \mu-2k\right)  }%
(\frac{x}{a})dx, \label{integral}%
\end{equation}
where we have used the integral representation (\ref{int-rep}). Setting $x=bt$
and $b=\omega a$ in (\ref{integral}), we obtain%
\begin{equation}
S_{k}\left(  a,b\right)  =\frac{2b}{\pi}A_{0}^{\nu}\left(  b\right)
A_{k}^{\mu}\left(  a\right)
{\displaystyle\int\limits_{0}^{1}}
\left(  1-t^{2}\right)  ^{\nu-\frac{1}{2}}\left(  1-\omega^{2}t^{2}\right)
^{\mu-2k-\frac{1}{2}}C_{2k}^{\left(  \mu-2k\right)  }(\omega t)dt.
\label{integral1}%
\end{equation}
Thus, we can re-write (\ref{integral1}) as%
\begin{align*}
S_{k}\left(  a,b\right)   &  =\frac{2b}{\pi}A_{0}^{\nu}\left(  b\right)
A_{k}^{\mu}\left(  a\right)  \frac{2^{2\mu-2k-1}\Gamma\left(  \mu-2k+\frac
{1}{2}\right)  \Gamma\left(  \mu-k\right)  }{\Gamma\left(  \frac{1}%
{2}-k\right)  \Gamma\left(  2\mu-4k\right)  \left(  2k\right)  !}\\
&  \times%
{\displaystyle\int\limits_{0}^{1}}
\left(  1-t^{2}\right)  ^{\nu-\frac{1}{2}}\ _{2}F_{1}\left(
\begin{array}
[c]{c}%
\frac{1}{2}+k,\frac{1}{2}-\mu+k\\
\frac{1}{2}%
\end{array}
;\omega^{2}t^{2}\right)  dt,
\end{align*}
or, using (\ref{A}) and changing variables in the integral,%
\begin{align*}
S_{k}\left(  a,b\right)   &  =\frac{\left(  -1\right)  ^{k}2^{2k}\sqrt{\pi}%
}{a^{2k+1}\Gamma\left(  \mu-k+\frac{1}{2}\right)  \Gamma\left(  \nu+\frac
{1}{2}\right)  \Gamma\left(  \frac{1}{2}-k\right)  }\\
&  \times%
{\displaystyle\int\limits_{0}^{1}}
s^{-\frac{1}{2}}\left(  1-s\right)  ^{\nu-\frac{1}{2}}\ _{2}F_{1}\left(
\begin{array}
[c]{c}%
\frac{1}{2}+k,\frac{1}{2}-\mu+k\\
\frac{1}{2}%
\end{array}
;\omega^{2}s\right)  ds.
\end{align*}

Recalling the formula \cite[Th. 2.2.4]{MR1688958}%
\[
_{2}F_{1}\left(
\begin{array}
[c]{c}%
a,b\\
c
\end{array}
;x\right)  =\frac{\Gamma\left(  c\right)  }{\Gamma\left(  d\right)
\Gamma\left(  c-d\right)  }%
{\displaystyle\int\limits_{0}^{1}}
t^{d-1}\left(  1-t\right)  ^{c-d-1}\ _{2}F_{1}\left(
\begin{array}
[c]{c}%
a,b\\
d
\end{array}
;xt\right)  dt,
\]
valid for $\operatorname{Re}\left(  c\right)  >\operatorname{Re}(d)>0,$
$x\in\mathbb{C}\setminus\left[  1,\infty\right)  ,$ we conclude that%
\begin{align*}
S_{k}\left(  a,b\right)   &  =\frac{\left(  -1\right)  ^{k}2^{2k}\sqrt{\pi}%
}{a^{2k+1}\Gamma\left(  \mu-k+\frac{1}{2}\right)  \Gamma\left(  \nu+\frac
{1}{2}\right)  \Gamma\left(  \frac{1}{2}-k\right)  }\\
&  \times\frac{\sqrt{\pi}\Gamma\left(  \nu+\frac{1}{2}\right)  }{\Gamma\left(
\nu+1\right)  }\ _{2}F_{1}\left(
\begin{array}
[c]{c}%
\frac{1}{2}+k,\frac{1}{2}-\mu+k\\
\nu+1
\end{array}
;\omega^{2}\right)  .
\end{align*}
But since \cite[1.2.1]{MR1688958}%
\[
\Gamma\left(  k+\frac{1}{2}\right)  \Gamma\left(  \frac{1}{2}-k\right)
=\left(  -1\right)  ^{k}\pi,\quad k=0,1,\ldots,
\]
the result follows.
\end{proof}

The special case of Theorem \ref{Theorem1} in which $k=0,$ was derived by
Cooke in \cite{Cooke}, as part of his work on Schl\"{o}milch series.

\begin{corollary}
\label{Corollary 1}If $0<b<a<\pi,\ \operatorname{Re}\left(  \mu\right)
>2k-\frac{1}{2},\operatorname{Re}\left(  \nu\right)  >-\frac{1}{2}%
,k\in\mathbb{N}_{0},$ then%
\[%
{\displaystyle\int\limits_{0}^{\infty}}
\frac{J_{\mu}\left(  at\right)  J_{\nu}\left(  bt\right)  }{t^{\mu+\nu-2k}}dt=%
{\displaystyle\sum\limits_{n=0}^{\infty}}
\varepsilon_{n}\frac{J_{\mu}\left(  an\right)  J_{\nu}\left(  bn\right)
}{n^{\mu+\nu-2k}}.
\]

\end{corollary}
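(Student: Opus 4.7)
The plan is to recognize that the corollary is essentially a bookkeeping reformulation of Theorem~\ref{Theorem1}: the sum on the right is (up to constant factors) $S_k(a,b)$, while the integral on the left can be evaluated in closed form via the Weber--Schafheitlin--Sonine formula (\ref{Sonine}). Both will turn out to equal the same ${}_{2}F_{1}$, and the corollary will follow from comparing prefactors.

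First I would rearrange the summand in Theorem~\ref{Theorem1}. Pulling the $a$, $b$, and $2$ factors out of $(\tfrac{1}{2}an)^{-\mu}$, $(\tfrac{1}{2}bn)^{-\nu}$, and $(an/2)^{2k}$ gives
\[
S_{k}(a,b) = \frac{2^{\mu+\nu-2k}}{a^{\mu-2k}\,b^{\nu}}\sum_{n=0}^{\infty}\varepsilon_{n}\frac{J_{\mu}(an)\,J_{\nu}(bn)}{n^{\mu+\nu-2k}},
\]
so by Theorem~\ref{Theorem1} the sum appearing in the corollary already has a closed form as a multiple of ${}_{2}F_{1}\!\left(\tfrac{1}{2}+k,\tfrac{1}{2}-\mu+k;\nu+1;(b/a)^{2}\right)$.

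Next I would apply (\ref{Sonine}) with the specific choice $\lambda=\mu+\nu-2k$. A short computation shows the three parameters $(\nu+\mu-\lambda+1)/2$, $(\nu-\mu-\lambda+1)/2$, and $(\lambda+\mu-\nu+1)/2$ become $k+\tfrac{1}{2}$, $\tfrac{1}{2}-\mu+k$, and $\mu-k+\tfrac{1}{2}$ respectively, so the hypergeometric on the right-hand side of (\ref{Sonine}) is exactly the one appearing in Theorem~\ref{Theorem1}. Before invoking the formula, I would verify (\ref{conditions}): the upper bound $\operatorname{Re}(\mu+\nu+1)>\operatorname{Re}(\lambda)$ reduces to $2k+1>0$, and the lower bound $\operatorname{Re}(\lambda)>-1$ follows because the hypotheses $\operatorname{Re}(\mu)>2k-\tfrac{1}{2}$ and $\operatorname{Re}(\nu)>-\tfrac{1}{2}$ add to give $\operatorname{Re}(\mu+\nu-2k)>-1$.

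Finally I would compare the two explicit expressions. Since $a^{\lambda-\nu-1}=a^{\mu-2k-1}$ and $2^{\lambda}=2^{\mu+\nu-2k}$, the right-hand side of (\ref{Sonine}) with this $\lambda$ is exactly $\tfrac{a^{\mu-2k}\,b^{\nu}}{2^{\mu+\nu-2k}}S_{k}(a,b)$ read off from Theorem~\ref{Theorem1}. Rearranging gives the identity in the corollary. I do not anticipate a genuine obstacle here; the one point requiring care is checking that the conditions of both Theorem~\ref{Theorem1} and of Sonine's formula are simultaneously satisfied under the stated hypotheses, and this was arranged by the choice of the inequalities on $\operatorname{Re}(\mu)$ and $\operatorname{Re}(\nu)$.
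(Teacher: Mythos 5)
Your proposal is correct and follows the same route as the paper: both specialize Sonine's formula (\ref{Sonine}) to $\lambda=\mu+\nu-2k$, match the resulting ${}_{2}F_{1}$ and prefactors against Theorem~\ref{Theorem1}, and verify the conditions (\ref{conditions}) from the hypotheses on $\operatorname{Re}(\mu)$ and $\operatorname{Re}(\nu)$. Your version simply spells out the prefactor bookkeeping that the paper leaves implicit.
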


\begin{proof}
The result follows immediately from (\ref{Sonine}) and (\ref{Sum}), after
taking $\lambda=\mu+\nu-2k.$ Note that since for all $k\in\mathbb{N}_{0}$
\[
\operatorname{Re}(\mu+\nu+1)=\operatorname{Re}\left(  2k+1+\lambda\right)
>\operatorname{Re}\left(  1+\lambda\right)  >\operatorname{Re}\left(
\lambda\right)
\]
and%
\[
\operatorname{Re}\left(  \lambda\right)  =\operatorname{Re}\left(  \mu
+\nu-2k\right)  >-1,
\]
the conditions (\ref{conditions}) are satisfied.
\end{proof}

\begin{corollary}
If $0<a,b<\pi,$ and $\operatorname{Re}(\mu)>0,$ then%
\[%
{\displaystyle\sum\limits_{n=0}^{\infty}}
\varepsilon_{n}\frac{J_{\mu}\left(  an\right)  J_{\mu}\left(  bn\right)  }{n}=%
{\displaystyle\int\limits_{0}^{\infty}}
\frac{J_{\mu}\left(  at\right)  J_{\mu}\left(  bt\right)  }{t}dt=\left\{
\begin{array}
[c]{c}%
\frac{1}{2\mu}\left(  \frac{a}{b}\right)  ^{\mu},\quad a\leq b\\
\frac{1}{2\mu}\left(  \frac{b}{a}\right)  ^{\mu},\quad a\geq b
\end{array}
\right.  .
\]

\end{corollary}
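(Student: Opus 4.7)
The plan is to combine Corollary \ref{Corollary 1} (to identify the sum with the integral) with the Weber–Sonine–Schafheitlin formula (\ref{Sonine}) (to evaluate the integral in closed form).

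First I would apply Corollary \ref{Corollary 1} with $\nu=\mu$ and the index $k$ chosen so that the exponent $\mu+\nu-2k$ equals $1$, i.e.\ $k=\mu-\tfrac{1}{2}$. This is a legitimate choice of $k\in\mathbb{N}_0$ precisely for $\mu\in\{\tfrac{1}{2},\tfrac{3}{2},\tfrac{5}{2},\ldots\}$, and for such $\mu$ the hypotheses $\operatorname{Re}(\mu)>2k-\tfrac{1}{2}$ and $\operatorname{Re}(\nu)>-\tfrac{1}{2}$ are automatically met, so Corollary \ref{Corollary 1} yields the sum-equals-integral equality directly.

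Next I would evaluate the integral for general $\operatorname{Re}(\mu)>0$ using (\ref{Sonine}) with $\nu=\mu$ and $\lambda=1$, under the assumption $0<b<a<\pi$. The parameter $\tfrac{\nu-\mu-\lambda+1}{2}$ equals $0$, so only the constant term in the defining series of the ${}_2F_1$ survives and the hypergeometric factor is identically $1$. The remaining prefactor simplifies, via $\Gamma(\mu+1)=\mu\,\Gamma(\mu)$, to $\tfrac{1}{2\mu}(b/a)^\mu$. The complementary case $0<a<b<\pi$ follows by interchanging $a$ and $b$ in (\ref{Sonine}), which is permissible because $\mu=\nu$.

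The main obstacle is extending the sum-equals-integral step from the half-integer values $\mu\in\{\tfrac{1}{2},\tfrac{3}{2},\ldots\}$, where Corollary \ref{Corollary 1} applies directly, to the full strip $\operatorname{Re}(\mu)>0$ asserted in the statement. I would handle this by analytic continuation: both sides define holomorphic functions of $\mu$ on $\operatorname{Re}(\mu)>0$ (using that $J_\mu(z)(z/2)^{-\mu}$ is entire in $z$ with controlled dependence on $\mu$, together with the uniform bound $J_\mu(an)J_\mu(bn)/n=O(n^{-2})$ on compact $\mu$-subsets coming from the standard large-argument asymptotics of $J_\mu$), and they agree on the infinite discrete sequence $\tfrac{1}{2},\tfrac{3}{2},\ldots$, so the identity theorem forces agreement throughout $\operatorname{Re}(\mu)>0$.
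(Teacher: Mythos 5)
Your first two steps reproduce the paper's (one-sentence) proof exactly: Corollary \ref{Corollary 1} with $\nu=\mu$ and $k=\mu-\tfrac12$ to equate sum and integral, then (\ref{Sonine}) with $\lambda=1$, where the second numerator parameter $\tfrac{\nu-\mu-\lambda+1}{2}=0$ kills the ${}_2F_1$ and the prefactor reduces to $\tfrac{1}{2\mu}(b/a)^{\mu}$. You are also right, and more careful than the paper, in observing that the requirement $k=\mu-\tfrac12\in\mathbb{N}_0$ restricts this deduction to $\mu\in\{\tfrac12,\tfrac32,\tfrac52,\ldots\}$.

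The analytic-continuation step, however, is a genuine error. The identity theorem requires the set on which two holomorphic functions agree to have an accumulation point \emph{inside} the domain; the set $\{\tfrac12,\tfrac32,\tfrac52,\ldots\}$ accumulates only at infinity, so agreement there forces nothing --- $\cos(\pi\mu)$ vanishes at every one of these points without vanishing identically on $\operatorname{Re}(\mu)>0$. Nor can the gap be closed another way, because the extension you are trying to prove appears to be false off the half-odd integers: by Poisson summation the difference between $\sum_{n}\varepsilon_n f(n)$ and $\int_0^\infty f\,dt$ for $f(t)=J_\mu(at)J_\mu(bt)/t$ is $2\sum_{m\ge1}\int_0^\infty f(t)\cos(2\pi m t)\,dt$, and evaluating these corrections via Gegenbauer's product formula together with (\ref{Sonine}) yields an overall factor $1/\Gamma(\tfrac12-\mu)$, which vanishes precisely when $\mu-\tfrac12\in\mathbb{N}_0$ and at no other $\mu$ with $\operatorname{Re}(\mu)>0$. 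A numerical check at $\mu=1$, $a=2$, $b=1$ bears this out: the partial sums of $\sum_{n\ge1}J_1(2n)J_1(n)/n$ settle near $0.206$, while the integral equals exactly $\tfrac14$. So the corollary should be read (and can only be proved) for $\mu=\tfrac12,\tfrac32,\tfrac52,\ldots$, which is exactly what your first step delivers and all that the resolution (\ref{eq:identity}) in the introduction requires; the stated hypothesis $\operatorname{Re}(\mu)>0$ is not attainable by this route, nor, it seems, by any other.
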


\begin{proof}
The result is a consequence of Corollary \ref{Corollary 1} and a special case
of the integral (\ref{Sonine}) (see \cite[13.42 (1)]{MR1349110}).
\end{proof}

\section{Numerical results}

\begin{figure}[ptb]
\caption{$\frac{1}{5}\left[ \frac{\min(a,b)}{\max(a,b)}\right]^{5/2}$ (solid) and $T_{10}(a,b)$ (dashed), for $a=\frac{\pi}{4}, \frac{2\pi}{4}, \frac{3\pi}{4}$ and $b\in[0,2\pi]$ }
\centering
\includegraphics[width=.77\textwidth]{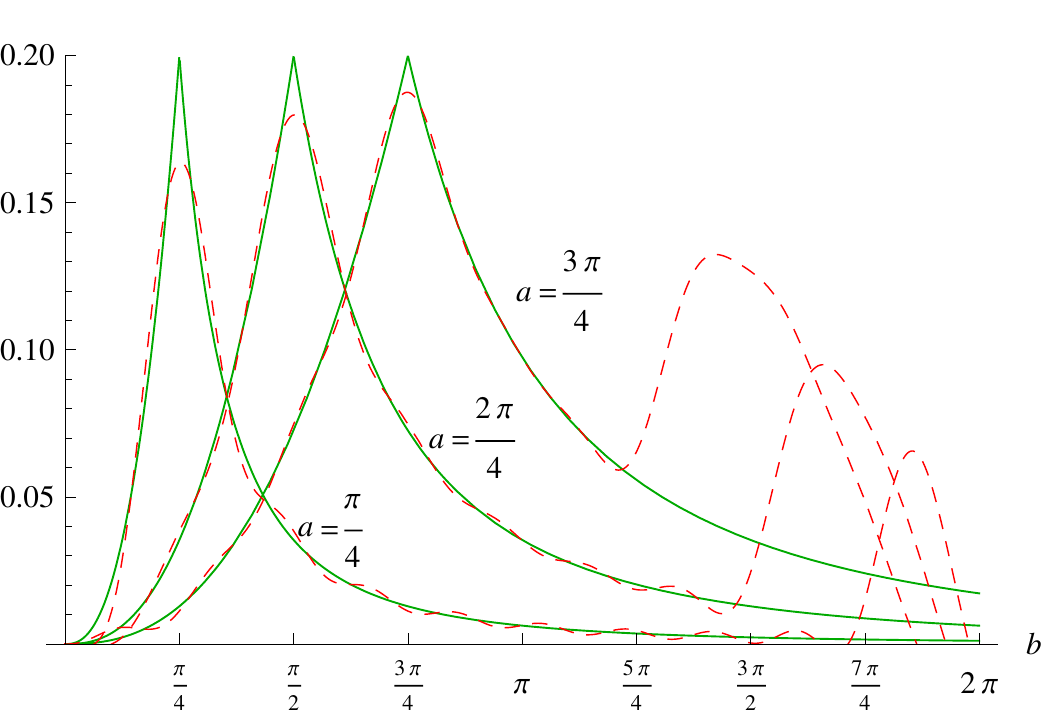}\end{figure}

\begin{figure}[ptb]
\caption{Truncation error $R_{20}(a,b)$ for $a,b\in[-2\pi,2\pi]$}%
\centering
\includegraphics[width=.77\textwidth]{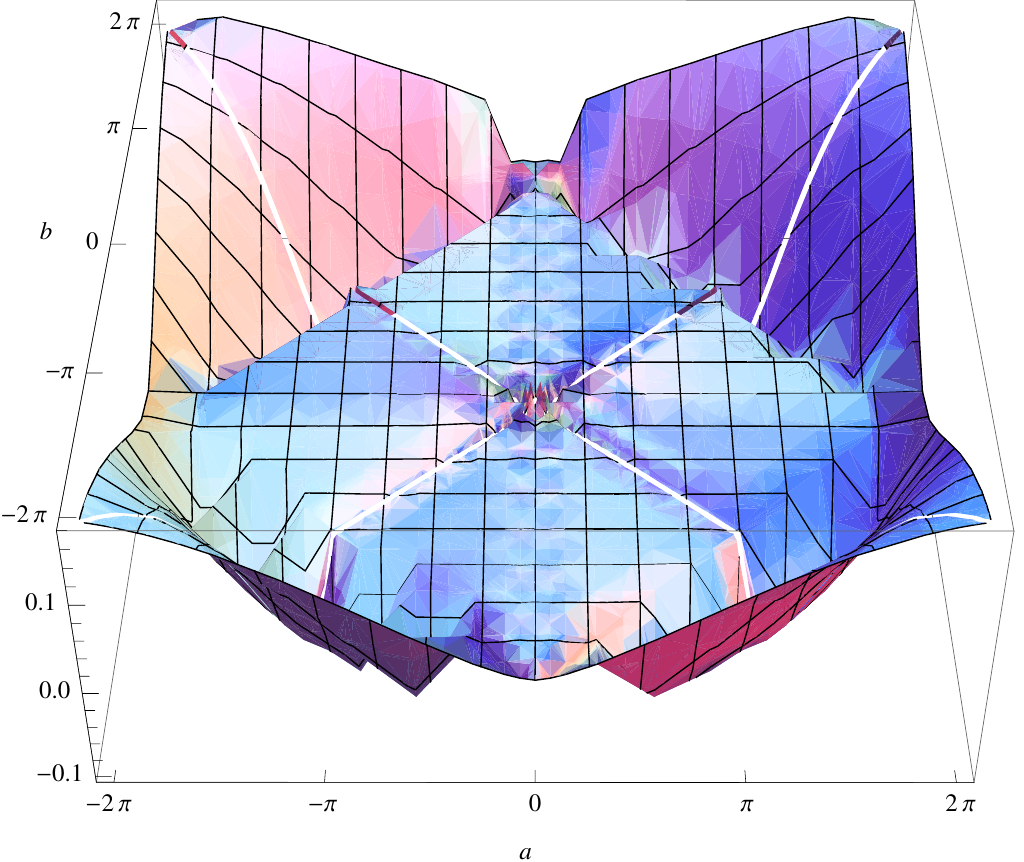}\end{figure}

\begin{table}[ptb]
\caption{$R_{N}(a,b)$ for $a = \pi/2$ and various $b$ and $N$}%
\begin{tabular}
[c]{c|rrrrrrr}\hline \hline
& \multicolumn{6}{c}{$b$} & \\
$N$ & $\pi/4$ & 2$\pi/4$ & 3$\pi/4$ & 4$\pi/4$ & 5$\pi/4$ & 6$\pi/4$ &
\\\hline
1 & 3.15E-02 & 1.81E-01 & 3.15E-02 & -2.37E-02 & -4.12E-02 & -3.09E-2 & \\
10 & -3.10E-03 & 2.02E-02 & -2.40E-03 & 1.45E-04 & 1.74E-03 & -1.16E-2 & \\
100 & -1.79E-06 & 2.03E-03 & -4.81E-07 & -1.39E-07 & 7.46E-07 & -1.17E-3 & \\
1000 & 1.81E-09 & 2.03E-04 & 4.86E-10 & -1.37E-10 & -7.46E-10 & -1.17E-4 & \\
10000 & 1.81E-12 & 2.03E-05 & 4.86E-13 & -1.37E-13 & -7.46E-13 & -1.17E-5 & \\
\hline \hline
\end{tabular}
\end{table}

The efficient and accurate computation of the $_2F_1$ function in (\ref{Sonine}) can be numerically challenging \cite{NR2007}, and we note that Corollary 1 provides a new route to its evaluation.  Because the numerator is bounded $|J_{\mu}\left(  at\right)  J_{\mu}\left(  bt\right)| \le 1$ \cite[13.42 (10)]{MR1349110}, the Weierstrass M-test shows that the series converges uniformly as long as $\mu+\nu-2k>1$ and $\mu,\nu>0$.

To explore the rate of convergence of the sum in Corollary 2, we choose $\nu= 5/2$.  The exact value of the integral is
\begin{equation}
	\int\limits_{0}^{\infty} \frac{J_{5/2}(at)  J_{5/2}(bt) }{t}dt = \frac{1}{5} \sqrt{\frac{|a|}{a}} \sqrt{\frac{|b|}{b}}
		\left[ \frac{\min(|a|,|b|)}{\max(|a|,|b|)} \right]^{5/2} 
\end{equation}
and truncation of the infinite series yields the finite sum
\begin{equation}
	T_N(a,b)=\sum_{n=1}^{N}\frac{J_{5/2}\left(  an\right)J_{5/2}\left(  bn\right)  }{n}
\end{equation}
and a truncation error
\begin{equation}
	R_N(a,b)=\frac{1}{5}\left[ \frac{\min(|a|,|b|)}{\max(|a|,|b|)} \right]^{5/2} - \sqrt\frac{|a|}{a} \sqrt\frac{|b|}{b} \ T_{N}(a,b). 
\end{equation}
The integral in (22) and sum in (23) are illustrated in Figure 1 and their difference (24) is shown in Figure 2 and Table 1.

The truncation errors in Table 1 reveal that the rate of convergence of the
Bessel sum is strongly dependent on the value of $a$ and $b$. For $b = 2\pi/4$ and $b
= 6\pi/4$, the truncation error appears to decay as $1/N$ but, for the other
values of $b$, it appears to decay as $1/N^{3}$. Although these empirical
results are interesting, we have not been able to determine analytically the
decay behavior as a function of $a$ and $b$.

The excellent agreement in Figure 1 and apparently flat plateau in Figure 2 strongly suggest that
Corollary 2 is true over the larger domain $|a| + |b| < 2\pi$. However, we
have not yet managed to find a proof for this.

\section{Concluding remark}

We provide a rigorous proof that the identity (5) is valid on the square domain
$a,b \in(0,\pi)$. A numerical study indicates that the rate of convergence of the sum in the identity is sensitive to
the values of $a$ and $b$ and further work to quantify this would be helpful.
Generalization of the identity is possible and should be explored in
the future work.

\section*{Acknowledgement}

The work of D. Dominici was supported by a Humboldt Research Fellowship for
Experienced Researchers from the Alexander von Humboldt Foundation.  P.M.W. Gill thanks
the Australian Research Council (Grants DP0984806 and DP1094170) for funding
and the NCI National Facility for a generous grant of supercomputer time. 
T. Limpanuparb thanks the Development and Promotion of Science and Technology
Talents Project for a Royal Thai Government PhD scholarship.  
\bibliographystyle{abbrv}

\begin{thebibliography}{10}

\bibitem{MR1688958}
G.~E. Andrews, R.~Askey, and R.~Roy.
\newblock {\em Special functions}
\newblock Cambridge University Press, Cambridge, 1999.

\bibitem{Cooke}
R.~G. Cooke.
\newblock On the theory of Schl\"{o}milch series.
\newblock {\em Proc. London Math. Soc.}, 28(2):207--241, 1928.

\bibitem{Ewald21}
P.~P. Ewald.
\newblock Die Berechnung optischer und elektrostatischer Gitterpotentiale
\newblock {\em Ann. Phys. (Leipzig)}, 64:253--287, 1921.

\bibitem{Gilbert1996}
A.~T.~B. Gilbert.
\newblock Coulomb orthonormal polynomials.
\newblock {B.Sc. (Hons) thesis, Massey University}, 1996.

\bibitem{ROII}
P.~M.~W. Gill and A.~T.~B. Gilbert.
\newblock Resolutions of the Coulomb operator. II. The Laguerre generator.
\newblock {\em Chem. Phys.}, 356:86--90, 2009.

\bibitem{GoldsteinBook}
H.~Goldstein.
\newblock {\em Classical Mechanics}.
\newblock Addison-Wesley, Cambridge, MA, 1980.

\bibitem{Greengard87}
L.~Greengard.
\newblock {\em The rapid evaluation of potential fields in particle systems}.
\newblock MIT Press, Cambridge, 1987.

\bibitem{HockneyBook}
R.~W. Hockney and J.~W. Eastwood.
\newblock {\em Computer simulation using particles}.
\newblock McGraw-Hill, New York, 1981.

\bibitem{KelloggBook}
O.~D. Kellogg.
\newblock {\em Foundations of potential theory}.
\newblock Springer-Verlag, Berlin, 1967.

\bibitem{MR2656096}
R.~Koekoek, P.~A. Lesky, and R.~F. Swarttouw.
\newblock {\em Hypergeometric orthogonal polynomials and their {$q$}-analogues}.
\newblock Springer-Verlag, Berlin, 2010.

\bibitem{LLbook}
L.~D. Landau and E.~M. Lifshitz.
\newblock {\em Quantum mechanics}.
\newblock Pergamon Press, Oxford, 1965.

\bibitem{ROIV}
T.~Limpanuparb, A.~T.~B. Gilbert, and P.~M.~W. Gill.
\newblock Resolutions of the Coulomb operator. IV. The spherical Bessel quasi-resolution.
\newblock {\em J. Chem. Theory Comput.}, submitted.

\bibitem{ROIII}
T.~Limpanuparb and P.~M.~W. Gill.
\newblock Resolutions of the Coulomb operator. III. Reduced-rank Schr{\"o}dinger equations.
\newblock {\em Phys. Chem. Chem. Phys.}, 11(40):9176--9181, 2009.

\bibitem{NISTbook}
F.~W.~J. Olver, D.~W. Lozier, R.~F. Boisvert, and C.~W. Clark, editors.
\newblock {\em NIST handbook of mathematical functions}.
\newblock Cambridge University Press, New York, 2010.

\bibitem{Payne1992}
M.~C. Payne, M.~P. Teter, D.~C. Allan, and J.~D. Joannopoulos.
\newblock {\em Rev. Mod. Phys.}, 64:1045, 1992.

\bibitem{NR2007}
W.~H. Press, S.~A. Teukolsky, W.~T. Vetterling, and B.~P. Flannery.
\newblock {\em Numerical recipes}.
\newblock Cambridge University Press, Cambridge, 2007.

\bibitem{Schafheitlin}
P.~Schafheitlin.
\newblock Ueber die Darstellung der hypergeometrische Reihe durch ein bestimmtes Integral.
\newblock {\em Math. Ann.}, XXX:157--178, 1887.

\bibitem{Sonine}
N.~Sonine.
\newblock Recherches sur les fonctions cylindriques et le d\'{e}veloppement des fonctions continues en s\'{e}ries.
\newblock {\em Math. Ann.}, XVI:1--80, 1880.

\bibitem{ROI}
S.~A. Varganov, A.~T.~B. Gilbert, E.~Deplazes, and P.~M.~W. Gill.
\newblock Resolutions of the Coulomb operator.
\newblock {\em J. Chem. Phys.}, 128:201104, 2008.

\bibitem{MR1349110}
G.~N. Watson.
\newblock {\em A treatise on the theory of Bessel functions}.
\newblock Cambridge University Press, Cambridge, 1995.

\bibitem{Weber}
H.~Weber.
\newblock Ueber die Besselschen Functionen und ihre Anwendung auf die Theorie der elektrischen Str{\"o}me.
\newblock {\em Journal f\"{u}r Math.}, LXXV:75--105, 1873.

\end{thebibliography}

\end{document}